\newcommand{\bb}[1]{\mathbb{#1}}
\newcommand{\cc}[1]{\mathcal{#1}}
\numberwithin{equation}{section}
\newtheorem{theorem}{Theorem}
\newtheorem{lemma}[theorem]{Lemma}
\newtheorem{question}[theorem]{Question}
\begin{document}
\title{A note on the combinatorial derivation of non-small sets}
\author{Joshua Erde \thanks{DPMMS, University of Cambridge}}
\maketitle
\begin{abstract}
Given an infinite group $G$ and a subset $A$ of $G$ we let $\Delta(A) = \{g \in G \,:\, |gA \cap A| =\infty\}$ (this is sometimes called the \emph{combinatorial derivation} of $A$). A subset $A$ of $G$ is called: \emph{large} if there exists a finite subset $F$ of $G$ such that $FA=G$; \emph{$\Delta$-large} if $\Delta(A)$ is large and \emph{small} if for every large subset $L$ of $G$, $(G \setminus A) \cap L$ is large. In this note we show that every non-small set is $\Delta$-large, answering a question of Protasov.
\end{abstract}
\section{Introduction}
For a subset $A$ of an infinite group $G$ we denote by

$$\Delta(A) = \{g \in G \,:\, |gA \cap A| = \infty\}.$$

This is sometimes called the \emph{combinatorial derivation} of $A$. We note that $\Delta(A)$ is a subset of $AA^{-1}$, the difference set of $A$. It can sometimes be useful to consider $\Delta(A)$ as the elements that appear in $AA^{-1}$ `with infinite multiplicity'. In \cite{P2011} Protasov analysed a series of results on the subset combinatorics of groups (see the survey \cite{S2011}) with reference to the function $\Delta$. These results were mainly to do with varying notions of the combinatorial size of a subset of a group. A subset $A$ of $G$ is said to be \cite{LP2009}:
\begin{itemize}
\item \emph{large} if there exists a finite subset $F$ of $G$ such that $FA=G$;
\item \emph{$\Delta$-large} if $\Delta (A)$ is large;
\item \emph{small} if $(G \setminus A) \cap L$ is large for every large subset $L$ of $G$.
\end{itemize}
Protasov asked \cite{P2011}:
\begin{question}\label{q:small}
Is every non-small subset of an arbitrary infinite group $G$ $\Delta$-large?
\end{question}

In this note we answer this question in the positive.

\begin{theorem}\label{t:small}
Let $G$ be an infinite group and $A$ a subset of $G$. If $A$ is not small, then $A$ is $\Delta$-large.
\end{theorem}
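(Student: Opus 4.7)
The plan is to prove directly that $F\Delta(A) = G$, where $F$ is any finite set with $FL = G$ for a large $L$ witnessing the failure of smallness. Since $A$ is not small, I fix such an $L$ with $L \setminus A$ not large together with a finite $F$ satisfying $FL = G$. Observe that $g \in F\Delta(A)$ is equivalent to $|gA \cap fA| = \infty$ for some $f \in F$ (since $f^{-1}g \in \Delta(A)$ unpacks to $|A \cap g^{-1}fA| = \infty$), and by pigeonhole over the finite set $F$ this holds as soon as $|gA \cap FA| = \infty$.

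I would therefore aim to show $|gA \cap FA| = \infty$ for every $g \in G$, arguing by contradiction. Suppose $|gA \cap FA| < \infty$. Then all but finitely many $a \in A$ satisfy $ga \notin FA$, so $f^{-1}ga \notin A$ for every $f \in F$. Since $FL = G$ forces $f^{-1}ga \in L$ for at least one $f$, we must have $f^{-1}ga \in L \setminus A$, and therefore $ga \in F(L \setminus A)$ for cofinitely many $a \in A$. Translating back by $g^{-1}$ and absorbing the finitely many exceptional $a$ into a slightly larger finite multiplier (using any element of $L \setminus A$, which is nonempty---else $L \subseteq A$ makes $A$ large and the desired claim is immediate), I get a finite $F_0 \supseteq g^{-1}F$ with $F_0 L = G$ and $A \subseteq F_0(L \setminus A)$.

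Decomposing $L = (L \cap A) \cup (L \setminus A)$ then yields
$$G = F_0 L \subseteq F_0 A \cup F_0(L \setminus A) \subseteq F_0^{2}(L \setminus A) \cup F_0(L \setminus A) = (F_0^{2} \cup F_0)(L \setminus A),$$
making $L \setminus A$ large and contradicting the hypothesis. The main obstacle is conceptual: a more naive route that tries to control each intersection $|gA \cap fA|$ separately, or that passes through piecewise syndeticity of $L \cap A$, appears only to yield a two-sided covering $G \subseteq F\Delta(A)F^{-1}$, and converting this to the one-sided largeness $G = F'\Delta(A)$ is not obviously automatic. The key move is to bound $|gA \cap FA|$ as a whole, extract the contradiction first, and invoke pigeonhole on $F$ only at the end.
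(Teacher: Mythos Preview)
Your proof is correct and follows essentially the same route as the paper: both suppose some $g \notin F\Delta(A)$ and use the resulting finiteness of each $gA \cap fA$ to show that (co-finitely many) elements of $A$ are sent by a translate in $g^{-1}F$ into $L\setminus A$, which forces $L\setminus A$ to be large and yields the contradiction. The paper packages this step as a separate lemma (Lemma~\ref{l:union}) stated for an arbitrary translation-invariant ideal $\mathcal{I}$ and assumes without loss of generality $A\subseteq L$, whereas you argue directly in the finite case and absorb the finitely many exceptional points by hand; the underlying mechanism is identical.
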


Our proof will hold in a slightly more general setting. Let us consider an arbitrary family of subsets $\cc{I} \subset \bb{P}(G)$. We will think of this family $\cc{I}$ as being in some way a set of subsets of $G$ that are insignificant in terms of their size. There are a few natural conditions that we will impose on the $\cc{I}$ which we consider. Firstly $\cc{I}$ should be closed under taking subsets, and also finite unions, we will call such an $\cc{I}$ an \emph{ideal}. Secondly $\cc{I}$ should be \emph{translation invariant}, that is for any $I \in \cc{I}$ and $g \in G$ we have that $gI = \{gi \,:\,i \in I\} \in \cc{I}$. Finally we will insist that $G \not\in \cc{I}$, that is $\cc{I} \neq \bb{P}(G)$, we call such an ideal \emph{proper}. The smallest non-trivial example of such a family is the set of finite subsets of $G$.\\
\ \\
Following on from Banakh and Lyaskovska \cite{BL2010}, given a translation invariant ideal $\cc{I} \subset \bb{P}(G)$ of an infinite group we say that $A =_{\cc{I}} B$ if the symmetric difference $A \triangle B \in I$. We say a subset $A$ of $G$ is:
\begin{itemize}
\item \emph{$\cc{I}$-large} if there exists a finite subset $F$ of $G$ such that $FA =_{\cc{I}} G$;
\item \emph{$\cc{I}$-small} if $(G \setminus A) \cap L$ is $\cc{I}$-large for every $\cc{I}$-large subset $L$ of $G$.
\end{itemize}

Similarly we can define $\Delta_{\cc{I}}(A) = \{g \in G \,:\, gA \cap A \not\in \cc{I}\}$, and say that a subset $A$ of $G$ is $\Delta_{\cc{I}}$-large if $\Delta_{\cc{I}}(A)$ is $\cc{I}$-large. In the case where $\cc{I}$ is the trivial ideal $\{\emptyset\}$, or when $\cc{I}$ is the ideal of finite subsets of $G$, these definitions agree with the ones above. Another natural example of such an $\cc{I}$ to consider would be the set of subsets of size less than a given cardinality $\kappa < |G|$. A less obvious example is the set of small sets, it is a simple check that for any proper translation invariant ideal $\cc{I}$ the set of $\cc{I}$-small sets, $\cc{S}_{\cc{I}}$, is also a proper translation invariant ideal. Banakh and Lyaskovska \cite{BL2010} showed that for any such $\cc{I}$ we have that $\cc{S}_{\cc{S}_{\cc{I}}} = \cc{S}_{\cc{I}}$, that is every set that is $\cc{S}_{\cc{I}}$-small is also $\cc{I}$-small. We show that when $\cc{I}$ is a translation invariant ideal the natural extension of Theorem \ref{t:small} holds.

\begin{theorem}\label{t:Ismall}
Let $G$ be an infinite group, $\cc{I}$ a proper translation invariant ideal of $G$ and $A$ a subset of $G$. If $A$ is not $\cc{I}$-small, then $A$ is $\Delta_{\cc{I}}$-large.
\end{theorem}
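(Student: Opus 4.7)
My approach is the contrapositive: if $\Delta_{\cc{I}}(A)$ is not $\cc{I}$-large, then $A$ is $\cc{I}$-small. Fix an arbitrary $\cc{I}$-large set $L$ with $FL =_{\cc{I}} G$ for some finite $F \subset G$, and set $B_L = L \setminus A$; I need to exhibit a finite $F'$ such that $F' B_L =_{\cc{I}} G$.

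\textbf{Key step.} Because $F\Delta_{\cc{I}}(A)$ fails to cover $G$ modulo $\cc{I}$, its complement is nonempty (in fact non-$\cc{I}$), and I pick any $g \in G \setminus F \Delta_{\cc{I}}(A)$. For every $f \in F$, the condition $f^{-1}g \notin \Delta_{\cc{I}}(A)$ unpacks, after left-multiplying by $f$, to $gA \cap fA \in \cc{I}$; taking the finite union over $f$ gives $gA \cap FA \in \cc{I}$. So $gA$ sits, modulo $\cc{I}$, inside $G \setminus FA$.

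\textbf{Finishing.} The $\cc{I}$-largeness of $L$ together with $FL \subseteq FA \cup FB_L$ yields the dual containment $G \setminus FA \subseteq FB_L$ modulo $\cc{I}$, so chaining gives $gA \subseteq FB_L$ modulo $\cc{I}$. Translating by $g^{-1}$ and then multiplying by $F$ (both operations preserve $\cc{I}$), this becomes $FA \subseteq Fg^{-1}F \cdot B_L$ modulo $\cc{I}$. Combining with $G \setminus FA \subseteq FB_L$ gives $G \subseteq (Fg^{-1}F \cup F) B_L$ modulo $\cc{I}$, so $F' := Fg^{-1}F \cup F$ is a finite witness that $B_L$ is $\cc{I}$-large. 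Since $L$ was arbitrary, $A$ is $\cc{I}$-small.

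\textbf{Main obstacle.} There is no genuine obstacle; the one idea is to re-use the same finite set $F$ from the $\cc{I}$-largeness of $L$ in two places, both in the cover $FL =_{\cc{I}} G$ and in extracting $g$ from $G \setminus F \Delta_{\cc{I}}(A)$. Everything else is routine $\cc{I}$-modulo bookkeeping using that $\cc{I}$ is closed under subsets, finite unions, and translations.
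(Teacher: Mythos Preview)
Your proof is correct and follows essentially the same route as the paper: both pick $g \notin F\Delta_{\cc{I}}(A)$ (where $F$ witnesses the $\cc{I}$-largeness of $L$), use $f^{-1}g \notin \Delta_{\cc{I}}(A)$ for each $f\in F$ to force $gA \cap FA \in \cc{I}$, and conclude that $(Fg^{-1}F \cup F)(L \setminus A) =_{\cc{I}} G$. The paper packages the core step as a separate lemma (Lemma~\ref{l:union}) and argues directly rather than by contrapositive, but the computation and the resulting finite witness $F' = F(g^{-1}F\cup\{e\}) = Fg^{-1}F\cup F$ are identical.
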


Thereom \ref{t:small} clearly follows by taking $\cc{I}$ to be the ideal of finite subsets of $G$, or the trivial ideal. We also go on to remark on some previous results relating to $\Delta$ in this new framework.

\section{Proof of Theorem \ref{t:Ismall}}
We note first that there is a natural ideal where the result can be seen to hold almost immediately. If we order the set of proper translation invariant ideals by inclusion we see that any chain of ideals has an upper bound, the union of the ideals in the chain, and so by Zorn's lemma there is some maximal such ideal $\cc{I}^*$. Since $\cc{I}^*$ is maximal $\cc{S}_{\cc{I}^*} = \cc{I}^*$ and so every $\cc{I}^*$-small set is a member of $\cc{I}^*$. Furthermore every subset of $G$ is either a member of $\cc{I}^*$ or $\cc{I}^*$-large. Indeed given $A \not\in \cc{I}^*$ we can consider the closure of the set $\cc{I}^* \cup A$ under taking subsets, finite unions and translations. This will be a translation invariant ideal $\cc{J}$ containing $\cc{I}^*$, and so since $\cc{I}^*$ maximal we have that $G \in \cc{J}$. Therefore, since $\cc{I}^*$ is an ideal, there is some finite subset $F$ of $G$ and some $I \in \cc{I}^*$ such that $FA \cup I = G$, that is $A$ is $\cc{I}^*$-large. We note that if $A$ is $\cc{I}^*$-small then $A$ cannot also be $\cc{I}^*$-large so if $\cc{L}^*$ is the set of $\cc{I}^*$-large sets we have that we can partition $\bb{P}(G)=\cc{I}^* \cup \cc{L}^*$. In \cite{P2011} (See also \cite{E2012}) it is shown that every large set is also $\Delta$-large, a similar argument shows that, when $\cc{I}$ is a translation invariant ideal, every $\cc{I}$-large set is also $\Delta_{\cc{I}}$-large.

\begin{lemma}\label{l:large}
Let $G$ be an infinite group, $\cc{I}$ a proper translation invariant ideal of $G$, $A$ a subset of $G$ and $F$ a finite subset of $G$. If $FA=_{\cc{I}} G$ then $F\Delta_{\cc{I}}(A)=G$. In particular if $A$ is $\cc{I}$-large then $A$ is $\Delta_{\cc{I}}$-large.
\end{lemma}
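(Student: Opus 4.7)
The plan is to show the contrapositive/direct version: given $g \in G$, we must find $f \in F$ with $f^{-1}g \in \Delta_{\cc{I}}(A)$, i.e.\ $f^{-1}gA \cap A \notin \cc{I}$. Using translation invariance, this is equivalent to finding $f \in F$ with $gA \cap fA \notin \cc{I}$, so the task reduces to showing that $gA$ has non-$\cc{I}$ intersection with at least one translate $fA$, $f\in F$.

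First I would record two easy consequences of the hypothesis $FA =_{\cc{I}} G$. On the one hand, $G \setminus FA \in \cc{I}$ directly. On the other hand, $FA \notin \cc{I}$, for otherwise $G = FA \cup (G\setminus FA)$ would be a finite union of elements of $\cc{I}$, contradicting that $\cc{I}$ is proper. Since $FA = \bigcup_{f\in F} fA$ is a finite union, some $fA \notin \cc{I}$, and translation invariance then forces $A \notin \cc{I}$, and hence $gA \notin \cc{I}$ for every $g \in G$.

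Now fix $g \in G$ and suppose for contradiction that $gA \cap fA \in \cc{I}$ for every $f \in F$. Because $F$ is finite and $\cc{I}$ is closed under finite unions, we obtain
\[
gA \cap FA \;=\; \bigcup_{f\in F} (gA \cap fA) \;\in\; \cc{I}.
\]
On the other hand $gA \setminus FA \subseteq G \setminus FA \in \cc{I}$, so $gA = (gA \cap FA) \cup (gA \setminus FA)$ is a union of two members of $\cc{I}$, giving $gA \in \cc{I}$. This contradicts the previous paragraph, so some $f\in F$ must satisfy $gA\cap fA \notin \cc{I}$, which by translation invariance gives $f^{-1}g \in \Delta_{\cc{I}}(A)$, i.e.\ $g \in F\Delta_{\cc{I}}(A)$. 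As $g$ was arbitrary we get $F\Delta_{\cc{I}}(A) = G$.

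The only subtle point is the passage from $FA =_{\cc{I}} G$ to $A \notin \cc{I}$, which uses both the properness of $\cc{I}$ and its closure under finite unions; once this is in hand, the rest is purely book-keeping with the ideal axioms and translation invariance, and the ``In particular'' clause then follows since $F\Delta_{\cc{I}}(A) = G$ trivially implies $F\Delta_{\cc{I}}(A) =_{\cc{I}} G$.
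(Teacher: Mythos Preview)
Your proof is correct and follows essentially the same route as the paper's: both fix $g\in G$, assume $gA\cap fA\in\cc{I}$ for every $f\in F$, use finite unions and $FA=_{\cc{I}}G$ to force $gA\in\cc{I}$ (hence $A\in\cc{I}$), and then contradict properness. The only cosmetic difference is that you isolate the fact $A\notin\cc{I}$ upfront, whereas the paper derives it inside the contradiction; the underlying argument is the same.
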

\begin{proof}
Let $F=\{f_1,f_2, \ldots, f_k\}$. We claim that for every $g \in G$ there is some $i$ such that $gA \cap f_iA \not\in \cc{I}$. Indeed, if $gA \cap f_iA \in \cc{I}$ for all $i$, then $\bigcup_i (gA \cap f_iA) = gA \cap (\bigcup f_iA) \in \cc{I}$. However we have that $\bigcup f_iA =_{\cc{I}} G$. Therefore we have that $gA =_{\cc{I}} gA \cap (\bigcup f_iA)$ and so $gA \in \cc{I}$, which implies that also $A \in \cc{I}$. But now we see that, since $G =_{\cc{I}} \bigcup f_iA$, $G \in \cc{I}$, contradicting the assumption that $\cc{I}$ is proper.\\ 
\ \\
Therefore for every $g \in G$ there is some $i$ such that $gA \cap f_iA \not\in \cc{I}$, and so $f_i^{-1}gA \cap A \not\in \cc{I}$. Hence for every $g \in G$ there is some $i$ such that $f_i^{-1}g \in \Delta_{\cc{I}}(A)$ and so $G=F\Delta_{\cc{I}}(A)$.
\end{proof}

Therefore, since every set which is not $\cc{I}^*$-small is $\cc{I}^*$-large and hence $\Delta_{\cc{I}^*}$-large, Theorem \ref{t:Ismall} holds for $\cc{I}^*$. In order to prove Theorem \ref{t:Ismall} for general $\cc{I}$ we will require the following lemma.
\begin{lemma}\label{l:union}
Let $G$ be an infinite group and $\cc{I}$ a proper translation invariant ideal of $G$. Let $X$ be a subset of $G$ such that there exists a finite subset $F$ of $G$ such that $FX =_{\cc{I}} G$. Then given a decomposition of $X$ into two sets $X=A \cup B$, either $F\Delta_{\cc{I}}(A) = G$ or there exists $g \in G$ such that $(g^{-1}F \cup \{e\}) B =_{\cc{I}} X$.
\end{lemma}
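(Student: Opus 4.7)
The plan is to handle the dichotomy directly: suppose $F\Delta_{\cc{I}}(A) \neq G$, fix any $g \in G \setminus F\Delta_{\cc{I}}(A)$, and show that this particular $g$ witnesses the second alternative. The heuristic is that missing the set $F\Delta_{\cc{I}}(A)$ forces $gA$ to be essentially disjoint from every translate $f_iA$, so the hypothesis $FX =_{\cc{I}} G$ pushes $gA$ into the other piece $FB$ modulo $\cc{I}$. Translating back by $g^{-1}$ then places $A$ inside $g^{-1}FB$ and the desired conclusion follows by taking the union with $B$.

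To make this precise, I would write $F = \{f_1, \ldots, f_k\}$. The condition $g \notin F\Delta_{\cc{I}}(A)$ says $f_i^{-1}g \notin \Delta_{\cc{I}}(A)$ for every $i$, which unwraps to $f_i^{-1}gA \cap A \in \cc{I}$, or equivalently $gA \cap f_iA \in \cc{I}$ after translating by $f_i$ (using that $\cc{I}$ is translation invariant). Since $\cc{I}$ is closed under finite unions this gives $gA \cap FA \in \cc{I}$.

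Next I would rewrite the hypothesis $FX =_{\cc{I}} G$ as $G \setminus (FA \cup FB) \in \cc{I}$, using that $FX = FA \cup FB$. Any element of $gA$ either lies in $FB$, or else lies in $FA$ (contributing to $gA \cap FA$), or outside $FA \cup FB$ altogether; hence
\[
gA \setminus FB \;\subseteq\; (gA \cap FA) \cup \bigl(G \setminus (FA \cup FB)\bigr) \;\in\; \cc{I}.
\]
Translating by $g^{-1}$ and once more invoking translation invariance of $\cc{I}$ gives $A \setminus g^{-1}FB \in \cc{I}$. Adjoining $B$ to both sides then yields
\[
X \setminus (g^{-1}F \cup \{e\})B \;=\; (A \cup B) \setminus (g^{-1}FB \cup B) \;\subseteq\; A \setminus g^{-1}FB \;\in\; \cc{I},
\]
which delivers the stated $=_{\cc{I}}$ relation in the useful direction, namely that $(g^{-1}F \cup \{e\})B$ covers all of $X$ modulo $\cc{I}$.

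The argument is essentially bookkeeping with the ideal axioms and I would not expect a genuine obstacle; the one substantive idea is the identification of ``$g \notin F\Delta_{\cc{I}}(A)$'' with ``$gA$ has negligible intersection with $FA$''. It is precisely this equivalence that allows the covering hypothesis $FX =_{\cc{I}} G$ to be converted into a covering of $X$ by the enlarged translate set $(g^{-1}F \cup \{e\})B$, so that the contrapositive of the first alternative forces $B$ to be almost as spread out within $X$ as $X$ itself is within $G$.
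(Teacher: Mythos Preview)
Your proof is correct and follows essentially the same approach as the paper: both assume $F\Delta_{\cc{I}}(A) \neq G$, pick a witness $g$, and use the equivalence between $g \notin F\Delta_{\cc{I}}(A)$ and $gA \cap FA \in \cc{I}$ to show that $B$ (together with its $g^{-1}F$-translates) covers $X$ modulo $\cc{I}$. The only difference is cosmetic: the paper argues elementwise via two exceptional sets $I_1, I_2 \in \cc{I}$, whereas you carry out the same computation directly with set operations; both establish only the covering direction $X \setminus (g^{-1}F \cup \{e\})B \in \cc{I}$, which is the direction actually used downstream.
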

\begin{proof}
As before let $F = \{f_1, \ldots , f_k\}$. If $F\Delta_{\cc{I}}(A) \neq G$, then there exists $g \in G$, $g \not\in F\Delta_{\cc{I}}(A)$, that is, $f_i^{-1} g  \not\in \Delta(A)$ for $i=1,\ldots,k$. So the set $I_1$ is in $\cc{I}$ where

\[
I_1 = \bigcup_i \{h \in X \,:\, h \in A \text{ and } f_i^{-1} g h  \in A\}.
\]

Also we claim that the set $I_2$ is in $\cc{I}$ where

\[
I_2 = \{h \in X \, : \, f_i^{-1}gh \not\in X \text{ for all } i\}.
\]

Since if $F^{-1} g h \cap X = \phi$ then we have that $gh \cap FX = \phi$. Then since $FX =_{\cc{I}} G$, that is $FX = (G \setminus J)$ for some $J \in \cc{I}$, we have that $h \in g^{-1}J$.\\
\ \\
Therefore for all $h \in X \setminus (I_1 \cup I_2)$ and for all $i$, no pair $h$, $f_i^{-1} g h$ are both in $A$, and at least one of the group elements $f_i^{-1} g h$ is in $X$. Hence either $h$ or $f_i^{-1} g h$ must be in $B$. Therefore we have that $(g^{-1}F \cup \{e\})B =_{\cc{I}} X$.
\end{proof}

Lemma \ref{l:union} essentially says that whenever we decompose an $\cc{I}$-large set $X$ into two parts $X=A\cup B$, if $A$ is not $\Delta_{\cc{I}}$-large, then $B$ is $\cc{I}$-large. From this we can deduce Theorem \ref{t:Ismall}.

\begin{proof}[Proof of Theorem \ref{t:Ismall}]
If $A$ is not $\cc{I}$-small then there exists an $\cc{I}$-large set $L$ such that $(G \setminus A) \cap L$ is not $\cc{I}$-large. Without loss of generality let us assume that $A \subset L$. Then $L = (L \setminus A) \cup A$, and there exists a finite subset $F$ of $G$ such that $FL =_{\cc{I}} G$. Therefore, by Lemma \ref{l:union}, either $F \Delta_{\cc{I}}(A) \neq G$, and so $A$ is $\Delta_{\cc{I}}$-large, or there exists some $g \in G$ such that $(g^{-1}F \cup \{e\}) (L \setminus A) =_{\cc{I}} L$. But then $F(g^{-1}F \cup \{e\}) (L \setminus A) =_{\cc{I}} G$. However by assumption $L \setminus A$ was not $\cc{I}$-large, and so $F \Delta_{\cc{I}}(A) = G$. Therefore $A$ is $\Delta_{\cc{I}}$-large.
\end{proof}

In \cite{BP2003} Banakh and Protasov showed:

\begin{theorem}[Banakh and Protasov]\label{t:decom}
Let $G$ be an infinite group. Given a decomposition $G=A_1 \cup \ldots \cup A_n$ then there exists an $i$ and a subset $F$ of $G$ such that $|F| \leq 2^{2^{n-1}-1}$ and $FA_i A_i^{-1} = G$.
\end{theorem}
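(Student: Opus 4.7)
The plan is to prove Theorem \ref{t:decom} by induction on $n$, using Lemma \ref{l:union} with the trivial ideal $\cc{I} = \{\emptyset\}$ (so that $=_{\cc{I}}$ becomes equality and $\Delta_{\cc{I}} = \Delta$) to peel off the parts one at a time. To make the induction close I would strengthen the claim as follows: whenever $G = FX$ for some finite $F \subseteq G$ and some $X = A_1 \cup \ldots \cup A_n$, there exist an index $i$ and a finite $F' \subseteq G$ with $|F'| \leq 2^{2^{n-1}-1}|F|^{2^{n-1}}$ such that $F' A_i A_i^{-1} = G$. Specialising to $X = G$ and $F = \{e\}$ recovers the theorem.

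The base case $n = 1$ is immediate from Lemma \ref{l:large}: $FA_1 = G$ yields $F\Delta(A_1) = G$, and since $\Delta(A_1) \subseteq A_1 A_1^{-1}$, we may take $F' = F$, meeting the bound $2^{2^0 - 1}|F|^{2^0} = |F|$.

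For the inductive step I would apply Lemma \ref{l:union} to the decomposition $X = A_1 \cup (A_2 \cup \ldots \cup A_n)$ together with the given $F$. In the first alternative, $F\Delta(A_1) = G$, so $FA_1 A_1^{-1} = G$ and the bound holds trivially. In the second alternative there is some $g \in G$ with $(g^{-1}F \cup \{e\})(A_2 \cup \ldots \cup A_n) = X$, and so setting $F_1 := F(g^{-1}F \cup \{e\})$ gives $F_1(A_2 \cup \ldots \cup A_n) = G$ with $|F_1| \leq |F|(|F|+1) \leq 2|F|^2$. Applying the inductive hypothesis to this $(n-1)$-part cover with the finite set $F_1$ yields some $i \in \{2, \ldots, n\}$ and $F'$ with
\[
|F'| \leq 2^{2^{n-2}-1}|F_1|^{2^{n-2}} \leq 2^{2^{n-2}-1}(2|F|^2)^{2^{n-2}} = 2^{2^{n-1}-1}|F|^{2^{n-1}},
\]
which is exactly the bound required for $n$.

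The main obstacle is choosing an inductive invariant that actually closes. A direct induction on $n$ that tries to fix $|F| = 1$ fails, because a single application of Lemma \ref{l:union} already produces a multi-element $F_1$, so the statement must track how $|F|$ compounds through the recursion. The recurrence $|F_1| \leq 2|F|^2$, iterated $n-1$ times starting from $|F| = 1$, is precisely what produces the doubly-exponential bound $2^{2^{n-1} - 1}$ claimed in the theorem; notice that the exponent $2^{n-1}-1$ arises because the base case contributes a factor of $2^0 = 1$ rather than $2$.
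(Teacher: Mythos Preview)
Your proof is correct. The paper does not actually supply its own proof of this theorem---it is quoted as a result of Banakh and Protasov from \cite{BP2003}---but immediately after stating it the paper remarks that one can obtain the same bound (even for $\Delta_{\cc{I}}$) by applying Lemma~\ref{l:union} inductively, which is precisely your strategy; your strengthened inductive invariant $|F'|\le 2^{2^{n-1}-1}|F|^{2^{n-1}}$ is exactly the one that makes the recursion $|F_1|\le 2|F|^2$ close.

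One small point of notation: with the trivial ideal $\cc{I}=\{\emptyset\}$ one has $\Delta_{\cc{I}}(A)=\{g:gA\cap A\neq\emptyset\}=AA^{-1}$, which is \emph{not} the combinatorial derivation $\Delta(A)$ defined at the start of the paper (that requires $|gA\cap A|=\infty$). This does not damage your argument---if anything it helps, since the first alternative of Lemma~\ref{l:union} then reads $FA_1A_1^{-1}=G$ directly, without passing through the inclusion $\Delta(A_1)\subseteq A_1A_1^{-1}$---but the parenthetical ``$\Delta_{\cc{I}}=\Delta$'' should be amended to ``$\Delta_{\cc{I}}(A)=AA^{-1}$''.
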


It is an old unsolved problem whether $i$ and $|F|$ can be chosen such that $FA_i A_i^{-1} = G$ and $|F| \leq n$. Noting that $\Delta(A_i) \subset A_iA_i^{-1}$, Protasov asked whether a similar result could hold true for some $\Delta(A_i)$. One can in fact prove a similar result, with the same bound on $|F|$, for $\Delta_{\cc{I}}$ by using Lemma \ref{l:union} inductively (See \cite{E2012}). However Banakh, Ravsky and Slobodianiuk \cite{BRS2013} were able to prove a a stronger result, replacing the bound $2^{2^{n-1}-1}$ with some function $\phi(n)$ which, whilst growing quicker than any exponential function, is eventually bounded by $n!$.

\begin{theorem}[Banakh, Ravsky and Slobodianiuk]\label{t:decom2}
Let $G$ be an infinite group, $\cc{I}$ a translation invariant ideal. Given a decomposition $G=A_1 \cup \ldots \cup A_n$ then there exists an $i$ and a subset $F$ of $G$ such that $|F| \leq \phi(n) := \max_{1<x \leq n} \frac{x^{n+1 - x} - 1}{x-1}$ and $F\Delta(A)_{\cc{I}} = G$.
\end{theorem}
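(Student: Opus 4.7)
The plan is to iterate Lemma \ref{l:union}, in the spirit of the inductive argument sketched earlier in the paper for the Banakh--Protasov bound $2^{2^{n-1}-1}$ (see \cite{E2012}). Set $F_0=\{e\}$ so that $F_0(A_1\cup\cdots\cup A_n)=G$, and at step $m$ apply Lemma \ref{l:union} to the decomposition $A_m\cup(A_{m+1}\cup\cdots\cup A_n)$ of the current set $X_{m-1}=A_m\cup\cdots\cup A_n$. Either $F_{m-1}\Delta_{\cc{I}}(A_m)=G$, in which case $A_m$ is our desired $A_i$ with covering set $F_{m-1}$, or we obtain a translate $g_m\in G$ and a new finite set $F_m:=F_{m-1}(g_m^{-1}F_{m-1}\cup\{e\})$ with $F_m(A_{m+1}\cup\cdots\cup A_n)=_{\cc{I}} G$. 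If the iteration runs without terminating, a final application of Lemma \ref{l:large} to the $\cc{I}$-large set $A_n$ gives $F_{n-1}\Delta_{\cc{I}}(A_n)=G$.

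This naive iteration yields only $|F_m|\leq|F_{m-1}|(|F_{m-1}|+1)$ and hence recovers the Banakh--Protasov bound, not $\phi(n)$. To obtain the sharper bound, one should exploit the fact that $\phi(n)=\max_{1<x\leq n}\sum_{k=0}^{n-x}x^k$ is a geometric sum: for each $x\in\{2,\ldots,n\}$ one attempts to arrange the iteration so that the covering set grows by at most a factor of $x$ per peeling step over the $n-x$ steps, producing $|F|\leq 1+x+x^2+\cdots+x^{n-x}$. The best choice of $x$ then yields $\phi(n)$. This requires choosing a favourable ordering of the $A_i$ and a sequence of translates $g_m$ so that the successive products $F_{m-1}(g_m^{-1}F_{m-1}\cup\{e\})$ overlap substantially, controlling the growth linearly in $|F_{m-1}|$ rather than at the worst-case rate $|F_{m-1}|+1$.

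The main obstacle is precisely this gap between the multiplicative growth that comes out of Lemma \ref{l:union} as stated and the geometric growth needed to reach $\phi(n)$. Closing it seems to demand either a strengthened lemma in which the translate $g_m$ can be chosen so that $g_m^{-1}F_{m-1}$ has large intersection with $F_{m-1}$, or a more global combinatorial reorganisation of the peeling process in which earlier choices can be revisited in the light of later ones. Working out such a refined scheme, with the exact parametrisation by $x$, is the technical core of the argument carried out in \cite{BRS2013}; Lemma \ref{l:union} is enough to make the strategy plausible but is not by itself strong enough to produce the bound $\phi(n)$ directly.
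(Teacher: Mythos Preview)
The paper does not contain a proof of Theorem~\ref{t:decom2}: it is merely stated and attributed to Banakh, Ravsky and Slobodianiuk \cite{BRS2013}. There is therefore no in-paper proof to compare your proposal against.

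That said, your proposal is not a proof either, and you say so yourself. The iterative scheme you describe via Lemma~\ref{l:union} is exactly what the paper alludes to just before the statement (``One can in fact prove a similar result, with the same bound on $|F|$, for $\Delta_{\cc{I}}$ by using Lemma~\ref{l:union} inductively (See \cite{E2012})''), and as you correctly note it yields only the recursion $|F_m|\le |F_{m-1}|(|F_{m-1}|+1)$, i.e.\ a doubly exponential bound in $n$, not $\phi(n)$. Your second paragraph is a heuristic about what a proof of the $\phi(n)$ bound \emph{might} look like, and your third paragraph explicitly concedes that Lemma~\ref{l:union} as stated is too weak and that the actual argument lives in \cite{BRS2013}. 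So the genuine gap---passing from doubly exponential growth of the covering sets to the geometric-sum bound $\phi(n)=\max_{1<x\le n}\sum_{k=0}^{n-x}x^k$---remains open in what you have written; you have identified it but not closed it.

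In short: the paper and your proposal are aligned in that both defer the proof to \cite{BRS2013}. If you intend this text to stand in for a proof, it does not; if you intend it as a remark indicating why Lemma~\ref{l:union} alone is insufficient and pointing the reader to \cite{BRS2013}, it is accurate and matches the paper's own treatment.
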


\bibliography{Small}
\bibliographystyle{plain}
\end{document}